\def\Hom{\mathop{\rm Hom}\nolimits}
\def\Ext{\mathop{\rm Ext}\nolimits}
\def\mod{\mathop{\rm mod}\nolimits}
\def\add{\mathop{\rm add}\nolimits}
\def\tilt{\mathop{\rm tilt}\nolimits}
\def\silt{\mathop{\rm silt}\nolimits}
\def\tilt{\mathop{\rm \tau\makebox{-}tilt}\nolimits}
\def\stilt{\mathop{\rm s\tau\makebox{-}tilt}\nolimits}
\def\H{\mathop{\rm H}\nolimits}
\def\rad{\mathop{\rm rad}\nolimits}
\def\t{\mathop{\rm tilt}\nolimits}
\def\Soc{\mathop{\rm soc}\nolimits}
\begin{document}

%% ----------------------------------------------------------------------
\newcommand{\nc}{\newcommand}

%\def\cosilting{quasi-cotilting }
%\def\Cosilting{Quasi-cotilting }

%%%%%%%%FROM latexexam.tex

\newtheorem{theorem}{Theorem}[section]
\newtheorem{proposition}[theorem]{Proposition}
\newtheorem{lemma}[theorem]{Lemma}
\newtheorem{corollary}[theorem]{Corollary}
\newtheorem{conjecture}[theorem]{Conjecture}
\newtheorem{question}[theorem]{Question}
\newtheorem{definition}[theorem]{Definition}
\newtheorem{example}[theorem]{Example}

\newtheorem{remark}[theorem]{Remark}
\def\Pf#1{{\noindent\bf Proof}.\setcounter{equation}{0}}
\def\>#1{{ $\Rightarrow$ }\setcounter{equation}{0}}
\def\<>#1{{ $\Leftrightarrow$ }\setcounter{equation}{0}}
\def\bskip#1{{ \vskip 20pt }\setcounter{equation}{0}}
\def\sskip#1{{ \vskip 5pt }\setcounter{equation}{0}}
\def\bg#1{\begin{#1}\setcounter{equation}{0}}
\def\ed#1{\end{#1}\setcounter{equation}{0}}
\def\KET{T^{^F\bot}\setcounter{equation}{0}}
\def\KEC{C^{\bot}\setcounter{equation}{0}}
%\def\KET{\mr{KerExt}_A^{i\ge 1}(T,-)\setcounter{equation}{0}}
%\def\KEC{\mr{KerExt}_A^{i\ge 1}(C,-)\setcounter{equation}{0}}
%\def\KEC{\mr{KerExt}_A^{i\ge 1}(-,C)\setcounter{equation}{0}}
%%%%%%%%%%

%----------------------------œÅ×¢ ÀàÐÍ ¿ªÊŒ-------------------------------------------------------------
\renewcommand{\thefootnote}{\fnsymbol{footnote}}
\setcounter{footnote}{0}
%²»Í¬µÄœÅ×¢·ûºÅ\footnote{·ûºÅÐÍœÅ×¢}£»²»Í¬µÄœÅ×¢·ûºÅ\footnote{·ûºÅÐÍœÅ×¢}
%\renewcommand{\thefootnote}{\arabic{footnote}}
%\setcounter{footnote}{0}
%²»Í¬µÄœÅ×¢·ûºÅ\footnote{Êý×ÖÐÍœÅ×¢}£»²»Í¬µÄœÅ×¢·ûºÅ\footnote{Êý×ÖÐÍœÅ×¢}
%\renewcommand{\thefootnote}{\roman{footnote}}
%\setcounter{footnote}{0}
%²»Í¬µÄœÅ×¢·ûºÅ\footnote{Ð¡ÐŽÂÞÂíÐÍœÅ×¢}£»²»Í¬µÄœÅ×¢·ûºÅ\footnote{Ð¡ÐŽÂÞÂíÐÍœÅ×¢}
%\renewcommand{\thefootnote}{\Roman{footnote}}
%\setcounter{footnote}{0}
%²»Í¬µÄœÅ×¢·ûºÅ\footnote{ŽóÐŽÂÞÂíÐÍœÅ×¢}£»²»Í¬µÄœÅ×¢·ûºÅ\footnote{ŽóÐŽÂÞÂíÐÍœÅ×¢}
%\renewcommand{\thefootnote}{\alph{footnote}}
%\setcounter{footnote}{0}
%²»Í¬µÄœÅ×¢·ûºÅ\footnote{Ð¡ÐŽ×ÖÄžÐÍœÅ×¢}£»²»Í¬µÄœÅ×¢·ûºÅ\footnote{Ð¡ÐŽ×ÖÄžÐÍœÅ×¢}
%\renewcommand{\thefootnote}{\Alph{footnote}}
%\setcounter{footnote}{0}
%²»Í¬µÄœÅ×¢·ûºÅ\footnote{ŽóÐŽ×ÖÄžÐÍœÅ×¢}£»²»Í¬µÄœÅ×¢·ûºÅ\footnote{ŽóÐŽ×ÖÄžÐÍœÅ×¢}
%²»Í¬µÄœÅ×¢·ûºÅ\footnote[8]{×Ô¶šÒåÐÍœÅ×¢}
%
%ÍšÑ¶×÷Õß£º\thanks{Corresponding author.} »á×Ô¶¯ÏÔÊŸ*
%
%Èç£º\author{author name\thanks{Corresponding author.}}
%----------------------------œÅ×¢ ÀàÐÍ œáÊø-------------------------------------------------------------------

%%%%%%%%%%%%%%%%%%%%%%%%%%%%%%%%%%%%%%%%%%%%%%%%%%%%%%%%%%%%%%%%%%%%%%%%%%%%%%%%%
%**************************±êÌâ¡¢ÕªÒª¡¢·ÖÀàºÅ¡¢¹ØŒü×Ö**************************

\title{\bf $\tau$-Tilting modules over one-point extensions by a simple module at a source point
\thanks{This work was partially supported by NSFC (Grant No. 11971225). } }
\footnotetext{
E-mail:~hpgao07@163.com}
\smallskip
\author{\small Hanpeng Gao\\
{\it \footnotesize Department of Mathematics, Nanjing University, Nanjing 210093, Jiangsu Province, P.R. China}\\
}

\date{}
\maketitle
\baselineskip 15pt%16pt%14pt%15.5pt%\baselineskip  25.5pt %
%%%%%%%\hskip 18pt
%
% Abstract ------------------------------------------------------
%
\begin{abstract}
Let $B$ be an one-point extension of a finite dimensional $k$-algebra $A$  by a simple $A$-module at a source point $i$. In this paper, we  classify the $\tau$-tilting modules over $B$. Moreover, it is
shown that there are equations
 $$|\tilt B|=|\tilt A|+|\tilt A/\langle e_i\rangle|\quad \text{and}\quad |\stilt B|=2|\stilt A|+|\stilt A/\langle e_i\rangle|.$$
 As a consequence, we can calculate  the numbers of  $\tau$-tilting modules and support $\tau$-tilting modules over linearly Dynkin type algebras whose
square radical are zero. %\mskip\
\vspace{10pt}

\noindent {\it 2020 Mathematics Subject Classification}: 16G20, 16G60.

%\sskip\

\noindent {\it Key words}: support $\tau$-tilting modules, one-point extensions, Dynkin type algebras.

\end{abstract}
%\smallskip
%
\vskip 30pt
% ----------------------------------------------------------------------
%% ----------------------------------------------------------------------
%\def\baselinestretch{1}

\section{Introduction}
As a generalization of tilting module,  the concept of support $\tau$-tilting modules is introduced by  Adachi, Iyama and Reiten\cite{AIR}. They are very important in representation theory of algebras because they  are in bijection with some important objects  including
functorially finite torsion classes,
 2-term silting complexes,  cluster-tilting objects.  It is  very interesting  to calculate  the number of support $\tau$-tilting modules over a given algebra.

 For Dynkin type algebras $\Delta_n$, the numbers of tilting modules and support tilting modules  were first calculated  in \cite{FZYsyst} via cluster algebras, and later in \cite{ONFR2015} via representation theory.

 Recall that a finite-dimensional $k$-algebra is said to be a $Nakayama~ algebra$  if every indecomposable projective module and every indecomposable injective module has a unique composition series.  May authors calculate  the numbers of  $\tau$-tilting modules and support $\tau$-tilting modules over Nakayama algebras. In particular, for square radical zero Nakayama algebra $\Lambda^2_n$ with $n$ simple modules,  there are  the following  recurrence relations (see, \cite{Adachi2016,Asai2018,GS2020}),
 $$|\tilt \Lambda^2_n|=|\tilt \Lambda^2_{n-1}|+|\tilt \Lambda^2_{n-2}|\quad \text{and}\quad |\stilt \Lambda^2_n|=2|\stilt \Lambda^2_{n-}|+|\stilt \Lambda^2_{n-2}|.$$

In this paper, we consider $\tau$-tilting modules and support $\tau$-tilting modules over the  one-point extension $B$ of $A$  by a simple $A$-module at a source point $i$. We will show that there is a bijection
$$\tilt B\mapsto \tilt A\coprod \tilt A/\langle e_i\rangle.$$
We also  get the following  equations,
 $$|\tilt B|=|\tilt A|+|\tilt A/\langle e_i\rangle|\quad \text{and}\quad |\stilt B|=2|\stilt A|+|\stilt A/\langle e_i\rangle|.$$
 As an application , we can calculate  the numbers of  $\tau$-tilting modules and support $\tau$-tilting modules over linearly Dynkin type algebras whose
square radical are zero.

Throughout this paper, all algebras will be basic,  connected, finite dimensional $k$-algebras over an algebraically closed field $k$.  For an algebra $A$, we denote by $\mod A$
 the category of finitely generated left $A$-modules and by $\tau$ the Auslander-Reiten
  translation of $A$.    Let  $P_i$ be the indecomposable projective module and $S_i$ the simple module of $A$ corresponding to the point $i$ for $i=1,2,\cdots,n$.   For  $M\in \mod A$, we also  denote by $|M|$ the number of pairwise nonisomorphic indecomposable summands of $M$  and by $\add M$  the full subcategory of $\mod A$ consisting of direct summands of finite direct sums of copies of $M$. For a set $X$, we denote by $|X|$ the cardinality of $X$.  For two sets $X,Y$, $X\coprod Y$ means the disjoint union.

\section{Main results}\label{sect 2}

Let $A$ be an algebra.  we   recall the definition about support $\tau$-tilting modules.

\begin{definition}\label{2.1} {\rm (\cite[Definition 0.1]{AIR})}
Let $M\in\mod A$.
\begin{enumerate}
\item[(1)] $M$ is  {\it $\tau$-rigid} if $\Hom_A(M,\tau M)=0$.
\item[(2)] $M$ is  {\it $\tau$-tilting}  if it is $\tau$-rigid and $|M|=|A|$.
\item[(3)] $M$ is {\it support $\tau$-tilting} if it is a $\tau$-tilting $A/AeA$-module
for some idempotent $e$ of $A$.
\end{enumerate}
\end{definition}

We will denote by $\tilt A$ (respectively, $\stilt A$) the set of isomorphism classes of basic $\tau$-tilting (respectively, support $\tau$-tilting) $A$-modules.

Let  $X\in \mod A$. The {\it one-point extension} of $A$ by $X$ is defined as the following matrix algebra
\begin{center}
$B=\left(\begin{matrix}
A &X\\
0 & k\\
\end{matrix}\right)$
\end{center}
with the ordinary matrix addition and the multiplication, and we write $B:=A[X]$ with $a$ the extension point.

Let $A$ be an algebra with  a source point $i$, in this paper, we always assume $B:=A[S_i]$. In this case, $P_a$ is an indecomposable projective-injective $B$-module and $S_a$ is  a simple injective $B$-module  by \cite[Proposition 2.5(c)]{ARS}.

\begin{lemma}\label{2.2} Let $M\in \mod B$. If $M$ is $\tau$-rigid, then $M\oplus P_a$ is also. Moreover,
if $M$ is $\tau$-tilting, then it have $P_a$ as a direct summand.
\end{lemma}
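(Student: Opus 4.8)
The statement has two parts. First, if $M \in \mod B$ is $\tau$-rigid then so is $M \oplus P_a$; second, if $M$ is moreover $\tau$-tilting then $P_a \mid M$. Both parts rest on the special position of the extension point $a$: since $i$ is a source in $A$, the projective $B$-module $P_a$ is projective-injective and $S_a$ is simple injective. I will use the characterization of $\tau$-rigidity through the Auslander--Reiten translate together with the fact that $\tau_B P_a = 0$ (as $P_a$ is projective), and that $P_a$ being injective makes $\Hom_B(-, P_a)$ behave well.

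For the first part, to show $M \oplus P_a$ is $\tau$-rigid I must check $\Hom_B(M \oplus P_a, \tau_B(M \oplus P_a)) = 0$. Decomposing, this amounts to four conditions: $\Hom_B(M, \tau_B M) = 0$ (given), $\Hom_B(P_a, \tau_B M) = 0$, $\Hom_B(M, \tau_B P_a) = 0$, and $\Hom_B(P_a, \tau_B P_a) = 0$. The last two are immediate since $\tau_B P_a = 0$ ($P_a$ projective). The crux is $\Hom_B(P_a, \tau_B M) = 0$. Here I would use the Auslander--Reiten formula: $\Hom_B(P_a, \tau_B M) \cong D\,\overline{\Ext}^1_B(M, P_a)$, or more directly, since $P_a$ is injective, a nonzero map $P_a \to \tau_B M$ would force $P_a$ to be a summand of $\tau_B M$ (an injective envelope consideration: the image would contain the socle). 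Actually the cleanest route: $\Hom_B(P_a, \tau_B M) \cong \Hom_B(P_a, \tau_B M)$ and by Auslander--Reiten duality this is dual to the space of morphisms $M \to P_a$ that do not factor through an injective—but $P_a$ \emph{is} injective, so this "stable" Hom group vanishes. Hence $\Hom_B(P_a, \tau_B M)=0$ and $M\oplus P_a$ is $\tau$-rigid.

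For the second part, suppose $M$ is $\tau$-tilting, so $M$ is $\tau$-rigid with $|M| = |B| = |A| + 1$. By the first part $M \oplus P_a$ is $\tau$-rigid. If $P_a$ were not a summand of $M$, then $M \oplus P_a$ would be $\tau$-rigid with $|M \oplus P_a| = |B| + 1 > |B|$, contradicting the fact (from \cite{AIR}) that a $\tau$-rigid module has at most $|B|$ non-isomorphic indecomposable summands—equivalently, that a $\tau$-tilting module is maximal $\tau$-rigid. Therefore $P_a \mid M$.

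\medskip

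\noindent\textbf{Main obstacle.} The only nontrivial point is the vanishing $\Hom_B(P_a, \tau_B M) = 0$, and everything hinges on packaging "$P_a$ is injective" correctly. I would phrase it via the Auslander--Reiten formula $\Hom_B(P_a, \tau_B M) \cong D\underline{\Hom}_B(M, P_a)$ (stable Hom modulo injectives) and note the right-hand side is zero because any map into the injective $P_a$ factors through $P_a$ itself. One should double-check which version of the AR formula is being invoked—there are two dual forms, $\overline{\Hom}$ (modulo projectives) and $\underline{\Hom}$ (modulo injectives)—and pick the one that exploits injectivity of $P_a$ rather than projectivity. Aside from that bookkeeping, the argument is short.
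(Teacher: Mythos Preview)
Your overall architecture matches the paper's: reduce to showing $\Hom_B(P_a,\tau M)=0$, then invoke maximality of $\tau$-tilting modules among $\tau$-rigid modules for the second claim. The problem is the justification of that key vanishing.

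The isomorphism you invoke, $\Hom_B(P_a,\tau M)\cong D\underline{\Hom}_B(M,P_a)$ with $\underline{\Hom}$ the injectively stable Hom, is \emph{not} an Auslander--Reiten formula and is false in general. The two AR formulas are
\[
\Ext^1_A(X,Y)\;\cong\;D\,\Hom_A^{\,/\mathrm{proj}}(Y,\tau X)\;\cong\;D\,\Hom_A^{\,/\mathrm{inj}}(\tau^{-1}Y,X),
\]
and neither yields your claimed identity. For a concrete failure take $A=k[x]/(x^2)$, $P=A$ (projective--injective), $M=S$ the simple: then $\tau S\cong S$ and $\Hom_A(A,\tau S)\cong k$, whereas $\underline{\Hom}_A(S,A)=0$ since $A$ is injective. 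So this is not ``bookkeeping''; the formula you want simply does not exist. Your alternative sketch (``a nonzero map $P_a\to\tau M$ forces $P_a$ to split off'') is also incomplete as written: a nonzero map out of an injective need not be a monomorphism, so injectivity of $P_a$ alone does not give a splitting.

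The paper argues differently and more elementarily. Because $\operatorname{rad}P_a=S_i$ is simple, the only indecomposable $B$-modules having $S_a$ as a composition factor are $P_a$ and $S_a$, and both are injective. Since $\tau M$ has no injective direct summand, it has no composition factor $S_a$, hence $\Hom_B(P_a,\tau M)=e_a(\tau M)=0$. Your direct approach can be repaired along the same lines: a nonzero $f\colon P_a\to\tau M$ is either mono (then $P_a$ splits off $\tau M$, impossible) or kills $\operatorname{soc}P_a=S_i$ and so factors through $S_a$, giving a mono $S_a\hookrightarrow\tau M$ (then $S_a$ splits off, again impossible). Either way you must use the length-two structure of $P_a$ together with injectivity of \emph{both} $P_a$ and $S_a$, exactly the ingredients the paper isolates.
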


\begin{proof}
Since $S_i$ is simple,  there are only two indecomposable $B$-modules $P_a$, $S_a$ which have $S_a$ as a composition factor and they are injective, we have $\tau M$ have no  $S_1$ as
a composition factor.Thus, $\Hom_A(P_a, \tau M)=0$ and, we get  $M\oplus P_a$ is $\tau$-rigid. If $M$ is $\tau$-tilting, then it is maximal $\tau$-rigid by \cite[Theorem 2.12]{AIR}. Hence,  it have $P_a$ as a direct summand.
\end{proof}

\begin{theorem}\label{2.3}
There is a  bijection
$$\tilt B\longleftrightarrow \tilt A\coprod \tilt A/\langle e_i\rangle.$$
\end{theorem}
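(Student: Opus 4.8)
The plan is to use Lemma~\ref{2.2} to strip $P_a$ off every $\tau$-tilting $B$-module and then to split the remaining summands according to whether $S_a$ occurs among them. Given $M\in\tilt B$, Lemma~\ref{2.2} lets me write $M=P_a\oplus N$ with $|N|=|B|-1=|A|$; since $S_i$ is simple, the only indecomposable $B$-modules not lying in $\mod A$ are $P_a$ and $S_a$ (which is precisely the fact used in the proof of Lemma~\ref{2.2}), so either $N\in\mod A$, or $N=S_a\oplus N'$ with $N'\in\mod A$ and $|N'|=|A|-1$. Thus $\tilt B=\mathcal U\sqcup\mathcal V$, where $\mathcal U=\{M\in\tilt B:S_a\nmid M\}$ and $\mathcal V=\{M\in\tilt B:S_a\mid M\}$, and I will construct mutually inverse bijections $\mathcal U\longleftrightarrow\tilt A$, $P_a\oplus N\leftrightarrow N$, and $\mathcal V\longleftrightarrow\tilt A/\langle e_i\rangle$, $P_a\oplus S_a\oplus N'\leftrightarrow N'$; together these give the asserted bijection.

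The technical core is a transfer principle for $\tau$-rigidity along the fully faithful inclusions $\mod A\hookrightarrow\mod B$ and $\mod A/\langle e_i\rangle\hookrightarrow\mod B$. I would isolate three ingredients. (a) For every vertex $j\neq a$ one has $P_j^B=P_j^A$ (with zero component at $a$), and $\rad_B P_j^B=\rad_A P_j^A$, so for $L\in\mod A$ the minimal projective presentation of $L$ over $A$ is also a minimal projective presentation over $B$, and it involves no $P_a$. (b) Since $i$ is a source in $A$, the simple $S_i$ occurs in $\mod A$ only as the top of $P_i^A$; hence if moreover $e_iL=0$ then the above presentation involves no $P_i$ either, and it also coincides with the minimal projective presentation of $L$ over $A/\langle e_i\rangle$. (c) For any finite-dimensional algebra $\Lambda$, any $L\in\mod\Lambda$ with minimal projective presentation $P_1\xrightarrow{\,p\,}P_0\to L\to0$, and any $N\in\mod\Lambda$, there is a natural isomorphism $\Hom_\Lambda(N,\tau_\Lambda L)\cong D\,\Coker\bigl(\Hom_\Lambda(P_0,N)\to\Hom_\Lambda(P_1,N)\bigr)$, the map being induced by $p$ (this is standard, from $\tau=D\Tr$). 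Since $\Hom_B(P_j^B,L)=e_jL=\Hom_A(P_j^A,L)$ for $L\in\mod A$ and $j\neq a$, ingredients (a) and (c) give, for $N,L\in\mod A$, the equivalence $\Hom_B(N,\tau_B L)=0\Longleftrightarrow\Hom_A(N,\tau_A L)=0$; and (b) together with (c) gives the analogous equivalence with $A/\langle e_i\rangle$ in place of $A$ whenever $N,L\in\mod A/\langle e_i\rangle$. I expect this transfer principle to be the main obstacle: the point to be careful about is to argue through the $\Coker$-formula of (c), which mentions neither $\tau$ nor the Nakayama functor, rather than to compare the Auslander--Reiten translates directly --- such a comparison is possible but delicate, since already $\nu_B P_i=P_a\neq S_i=\nu_A P_i$.

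Granting the transfer principle, the bijections are routine. If $M=P_a\oplus N\in\mathcal U$, then $N$ is $\tau_B$-rigid as a summand of $M$, hence $\tau_A$-rigid, and $|N|=|A|$, so $N\in\tilt A$; conversely if $N\in\tilt A$, then $N$ is $\tau_B$-rigid, so $P_a\oplus N$ is $\tau_B$-rigid by Lemma~\ref{2.2}, has $|B|$ indecomposable summands, hence lies in $\tilt B$, and has no $S_a$ summand. If $M=P_a\oplus S_a\oplus N'\in\mathcal V$, then using the minimal projective presentation $P_i^B\to P_a^B\to S_a\to0$ (note $\rad P_a^B=S_i$), ingredient (c) gives $\Hom_B(N',\tau_B S_a)\cong D(e_iN')$; as $M$ is $\tau_B$-rigid this vanishes, so $e_iN'=0$, i.e.\ $N'\in\mod A/\langle e_i\rangle$, and then the transfer principle shows $N'$ is $\tau_{A/\langle e_i\rangle}$-rigid; since $|N'|=|A|-1=|A/\langle e_i\rangle|$, we get $N'\in\tilt A/\langle e_i\rangle$. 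Conversely, for $N'\in\tilt A/\langle e_i\rangle$ one checks that $S_a\oplus N'$ is $\tau_B$-rigid: $\Hom_B(N',\tau_B N')=0$ by the transfer principle, $\Hom_B(S_a,\tau_B S_a)=0$ and $\Hom_B(S_a,\tau_B N')=0$ follow from (c) because $\Hom_B(P_j^B,S_a)=e_jS_a=0$ for $j\neq a$, and $\Hom_B(N',\tau_B S_a)=D(e_iN')=0$ since $e_iN'=0$; then Lemma~\ref{2.2} and a count of summands place $P_a\oplus S_a\oplus N'$ in $\tilt B$, indeed in $\mathcal V$. In each case the two assignments are visibly mutually inverse, so $\tilt B=\mathcal U\sqcup\mathcal V\longleftrightarrow\tilt A\coprod\tilt A/\langle e_i\rangle$, as claimed.
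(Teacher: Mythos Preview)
Your proof is correct and follows the same overall architecture as the paper: strip off $P_a$ via Lemma~\ref{2.2}, split on whether $S_a$ occurs, and match the two pieces with $\tilt A$ and $\tilt A/\langle e_i\rangle$ respectively. The bijections you write down are exactly the ones in the paper.

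Where you differ is in the technical engine. The paper identifies $\tau_B S_a=S_i$ directly from the almost split sequence $0\to S_i\to P_a\to S_a\to 0$, and then argues with composition factors (e.g.\ $\tau M$ never has $S_a$ as a factor, and if $L$ had $S_i$ as a factor then some summand would have top $S_i$, giving $\Hom_B(L,S_i)\neq0$); the passage of $\tau$-rigidity between $B$, $A\cong B/\langle e_a\rangle$, and $A/\langle e_i\rangle$ is left largely implicit. You instead set up a uniform ``transfer principle'' via the isomorphism $\Hom_\Lambda(N,\tau_\Lambda L)\cong D\Coker\bigl(\Hom_\Lambda(P_0,N)\to\Hom_\Lambda(P_1,N)\bigr)$ together with the observation that minimal projective presentations are unchanged along the relevant inclusions. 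This buys you a self-contained justification of the rigidity transfer (and the neat identity $\Hom_B(N',\tau_B S_a)\cong D(e_iN')$), at the cost of a slightly longer setup; the paper's route is shorter but leans on folklore about $\tau$-rigidity under idempotent quotients. Either way the argument goes through.
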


\begin{proof}
Let $M\in  \tilt A\coprod \tilt A/\langle e_i\rangle$. If $M\in \tilt A$, then $\tau M$ has no $S_a$ as a composition factor since the vertex $a$ is a source in $B$, and hence $\Hom_{B}(P_a,\tau M)=0$. Therefore,  $M\oplus P_a$ is a $\tau$-tilting $B$-module since it is $\tau$-rigid and $|M\oplus P_a|=|M|+1=|A|+1=|B|$. If $M\in \tilt A/\langle e_i\rangle$, then $M$ has no $S_i$ as a composition factor  and $\tau M$ has no $S_a$ as a composition factor.
Note that there is an almost split sequence $0\to S_i\to P_a\to S_a\to 0$, we have $\tau S_a=S_i$. Thus,
$$\Hom_{B}(M\oplus P_a\oplus S_a, \tau(M\oplus P_1\oplus S_a))=\Hom_{B}(M\oplus P_a\oplus S_a, \tau M\oplus S_i)=0.$$
So,  $M\oplus P_a\oplus S_a$ is a $\tau$-tilting $B$-module.

Conversely, Let $M\in \tilt B$. Then we decompose $M$ as $M=P_a\oplus N$ by Lemma \ref{2.2}. If $N$ has no $S_a$ as direct summand, the $N$ is a $\tau$-tilting $B/\langle e_a\rangle(\cong A)$-module, that is,
$N\in \tilt A$. If $N$ has $S_a$ as direct summand, then  we decompose $N$ as $N=S_a\oplus L$ where $L$ has no $S_a$ as a composition factor. We claim that
$L$ has no $S_i$ as a composition factor.  Otherwise,  there is a summand $K$ of $L$ such that the top of $K$ is $S_i$ since $i$ is a source point. In particular, $\Hom_{B}(L, S_i)\neq 0$. This implies
$$\Hom_{B}(M, \tau M)=\Hom_{B}(L\oplus P_a\oplus S_a, \tau L\oplus S_i)\neq 0.$$  This is a contradiction.  Hence, $L$ is a $\tau$-tilting $A/\langle e_i\rangle$-module , that is,  $L\in  \tilt A/\langle e_i\rangle$.

\end{proof}

\begin{corollary}\label{a1} All $\tau$-tilting $B$-modules are exactly those forms $P_a\oplus M_1$ and $P_a\oplus S_a\oplus M_2$ where $M_1$ and $M_2$ are $\tau$-tilting modules over $A$  and  $A/\langle e_i\rangle$ respectively.

\end{corollary}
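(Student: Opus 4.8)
The plan is to read off the explicit description directly from the bijection constructed in the proof of Theorem \ref{2.3}, so no new argument is really needed. First I would recall that the map $\tilt A\coprod \tilt A/\langle e_i\rangle\to \tilt B$ built there sends $M_1\in\tilt A$ to $P_a\oplus M_1$ and $M_2\in\tilt A/\langle e_i\rangle$ to $P_a\oplus S_a\oplus M_2$, and that the first part of that proof already verifies both of these are genuine $\tau$-tilting $B$-modules (the $\tau$-rigidity coming from $\tau S_a=S_i$ together with the source condition at $a$, and the counting $|P_a\oplus M_1|=|B|=|P_a\oplus S_a\oplus M_2|$). Hence every $B$-module of the two stated forms lies in $\tilt B$.

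For the converse I would invoke Lemma \ref{2.2} to split off the projective-injective summand, writing an arbitrary $M\in\tilt B$ as $M=P_a\oplus N$. Then the case analysis in the proof of Theorem \ref{2.3} applies verbatim: either $N$ has no direct summand isomorphic to $S_a$, in which case $N$ is a $\tau$-tilting $B/\langle e_a\rangle\cong A$-module and $M=P_a\oplus N$ has the first form; or $N=S_a\oplus L$ with $L$ having no $S_a$-composition factor, and the contradiction argument with $\Hom_B(L,S_i)\neq0$ forces $L$ to have no $S_i$-composition factor either, so $L$ is a $\tau$-tilting $A/\langle e_i\rangle$-module and $M=P_a\oplus S_a\oplus L$ has the second form.

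The one point worth a sentence of care is the identification of the ambient module categories under which $M_1$ and $M_2$ are viewed as $B$-modules: one uses the fully faithful embedding $\mod A\hookrightarrow\mod B$ given by the $B$-modules on which $e_a$ acts as zero (since $B/\langle e_a\rangle\cong A$), and similarly $\mod A/\langle e_i\rangle$ is identified with the $B$-modules annihilated by both $e_a$ and $e_i$. With these identifications the two families are automatically disjoint, the first never containing $S_a$ as a summand and the second always doing so, which is precisely the coproduct appearing in Theorem \ref{2.3}.

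I do not anticipate any real obstacle: all the module-theoretic content is already packaged in Lemma \ref{2.2} and in the proof of Theorem \ref{2.3}, and the corollary is simply a reformulation of that bijection making the shape of the modules explicit. The only mild bookkeeping is confirming that the two assignments $M_1\mapsto P_a\oplus M_1$ and $M_2\mapsto P_a\oplus S_a\oplus M_2$ are well defined and jointly exhaustive, which is exactly what Theorem \ref{2.3} asserts.
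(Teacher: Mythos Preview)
Your proposal is correct and matches the paper's approach: the corollary is stated there without proof, as an immediate restatement of the explicit maps constructed in the proof of Theorem \ref{2.3}. Your write-up simply spells out that restatement, invoking Lemma \ref{2.2} and the two cases of Theorem \ref{2.3} exactly as the paper implicitly does.
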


The above Corollary give a relation about $|\tilt B|$ and $|\tilt A|$.

\begin{corollary} \label{2.4} We have
 $$|\tilt B|=|\tilt A|+|\tilt A/\langle e_i\rangle|.$$
\end{corollary}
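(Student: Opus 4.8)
The plan is to derive Corollary \ref{2.4} directly from the bijection established in Theorem \ref{2.3}, since the cardinality statement is an immediate consequence of having an explicit bijection between the relevant sets.

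First I would invoke Theorem \ref{2.3}, which gives a bijection $\tilt B \longleftrightarrow \tilt A \coprod \tilt A/\langle e_i\rangle$. Since a bijection between finite sets preserves cardinality, and since the cardinality of a disjoint union is the sum of the cardinalities of its components, we obtain
$$|\tilt B| = |\tilt A \coprod \tilt A/\langle e_i\rangle| = |\tilt A| + |\tilt A/\langle e_i\rangle|.$$
Alternatively, one can read this off from Corollary \ref{a1}: the $\tau$-tilting $B$-modules split into two disjoint families, those of the form $P_a \oplus M_1$ with $M_1 \in \tilt A$, and those of the form $P_a \oplus S_a \oplus M_2$ with $M_2 \in \tilt A/\langle e_i\rangle$. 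The map $M_1 \mapsto P_a \oplus M_1$ is injective (one recovers $M_1$ by deleting the summand $P_a$), and likewise $M_2 \mapsto P_a \oplus S_a \oplus M_2$ is injective; moreover the two families are disjoint because membership of $S_a$ as a summand distinguishes them, and together they exhaust $\tilt B$. Counting the two families separately yields the claimed equality.

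The only point requiring a small amount of care — and the nearest thing to an obstacle — is verifying that the two families in Corollary \ref{a1} are genuinely disjoint and that each parametrization is injective, i.e. that no $\tau$-tilting $B$-module is counted twice. Disjointness follows since in the first family $S_a$ is not a summand (as $N \in \tilt A$ has no $S_a$ as a summand in the proof of Theorem \ref{2.3}), whereas in the second $S_a$ is a summand by construction. Injectivity within each family holds because basic modules are determined by their multiset of indecomposable summands, so deleting the fixed summands $P_a$ (respectively $P_a \oplus S_a$) recovers the parameter uniquely. With these observations the equation $|\tilt B| = |\tilt A| + |\tilt A/\langle e_i\rangle|$ follows at once.
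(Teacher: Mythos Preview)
Your proposal is correct and matches the paper's approach: the paper gives no separate proof of Corollary~\ref{2.4}, treating it as an immediate consequence of the bijection in Theorem~\ref{2.3} (equivalently, of the classification in Corollary~\ref{a1}). Your additional verification of disjointness and injectivity of the two families is more detailed than what the paper supplies, but it is exactly the content implicit in the word ``bijection'' in Theorem~\ref{2.3}.
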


Let $A$ be an algebra and $M\in\mod A$. $M$ is called  a (classical) $tilting$ module if
\begin{enumerate}
\item[(1)] The projective dimension of $M$ is at most one.
\item[(2)] $\Ext_A^1(M,M)=0$.
\item[(3)] $|M|=|A|$.
\end{enumerate}

Hence, an $A$-module $M$ is tilting if and only if it is a $\tau$-tilting and
its projective dimension is at most one by  the Auslander-Reiten formula. The set of all tilting $A$-modules will be denoted by $\t  A$.

\begin{corollary}\label{a2} Let $A$ be an algebra with a source $i$. Assume that $i$ is not a sink and $B:=A[S_i]$.   All tilting $B$-modules are exactly those forms $P_a\oplus M_1$  where $M_1$ is a  tilting module over $A$. In particular, $|\t B|=|\t A|$.\end{corollary}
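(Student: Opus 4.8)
The plan is to leverage Corollary~\ref{a1}, which already classifies all $\tau$-tilting $B$-modules into the two families $P_a\oplus M_1$ (with $M_1$ a $\tau$-tilting $A$-module) and $P_a\oplus S_a\oplus M_2$ (with $M_2$ a $\tau$-tilting $A/\langle e_i\rangle$-module), and to single out which of these are actually tilting, i.e.\ of projective dimension at most one. First I would recall that an $A$-module is tilting precisely when it is $\tau$-tilting of projective dimension $\le 1$ (this is stated in the excerpt via the Auslander--Reiten formula), so the task reduces to a homological-dimension bookkeeping over $B$.

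Next I would compute projective dimensions over $B$. The key structural facts are that $a$ is a source point of $B$, that $P_a$ is projective (hence $\pd_B P_a=0$), and that for any $A$-module $N$ viewed as a $B$-module, a projective $B$-resolution can be built from a projective $A$-resolution together with the extra projective $P_a$; concretely $\pd_B N$ and $\pd_A N$ differ in a controlled way. The crucial point is the module $S_a$: from the almost split sequence $0\to S_i\to P_a\to S_a\to 0$ one reads off a short exact sequence exhibiting a projective presentation of $S_a$, so $\pd_B S_a=1+\pd_B S_i=1+\pd_A S_i$ (using that $i$ is a source in $A$ as well, $P_i=S_i$ only when $i$ is also a sink). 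Since by hypothesis $i$ is a source but \emph{not} a sink, $S_i$ is not projective as an $A$-module, so $\pd_A S_i\ge 1$, whence $\pd_B S_a\ge 2$. Therefore no $\tau$-tilting $B$-module of the form $P_a\oplus S_a\oplus M_2$ can be tilting, which eliminates the entire second family.

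For the first family $P_a\oplus M_1$, I would argue that $\pd_B(P_a\oplus M_1)\le 1$ if and only if $\pd_A M_1\le 1$. One direction is immediate since $B/\langle e_a\rangle\cong A$ and restriction of scalars along a quotient by an idempotent does not increase projective dimension in the relevant sense; for the other direction I would check that lifting an $A$-projective resolution of $M_1$ of length $\le 1$ to $B$ stays of length $\le 1$, using that $M_1$ has no $S_a$-composition factors and that $a$ is a source so no syzygy over $B$ introduces a copy of $P_a$ beyond what is already present. Consequently $P_a\oplus M_1$ is a tilting $B$-module exactly when $M_1$ is a tilting $A$-module, and the assignment $M_1\mapsto P_a\oplus M_1$ gives the claimed bijection $\t A\leftrightarrow \t B$; counting both sides yields $|\t B|=|\t A|$.

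The main obstacle I anticipate is the precise homological comparison between $\pd_B$ and $\pd_A$ for an $A$-module regarded as a $B$-module, and in particular making rigorous the claim that passing to $B$ does not lengthen a resolution of $M_1$ while it genuinely lengthens the resolution of $S_a$ by exactly one. This hinges on a clean description of the syzygies over $B$ in terms of those over $A$ together with the exact sequence $0\to S_i\to P_a\to S_a\to 0$; once that is set up carefully (using that $S_i$ is not $B$-projective because $a$ is a source feeding into $i$, equivalently because $i$ is not a sink in $A$), the rest is routine bookkeeping combined with Corollary~\ref{a1}.
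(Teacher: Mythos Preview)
Your proposal is correct and follows essentially the same route as the paper: invoke Corollary~\ref{a1} to split into the two families, rule out the $P_a\oplus S_a\oplus M_2$ family by using the exact sequence $0\to S_i\to P_a\to S_a\to 0$ together with the non-projectivity of $S_i$ (since $i$ is not a sink) to get $\pd_B S_a\ge 2$, and handle the $P_a\oplus M_1$ family by noting that $\pd_B M_1=\pd_A M_1$ because $a$ is a source of $B$. The paper simply asserts the last equality without further justification, so your extra care there is not misplaced but also not a departure from the intended argument.
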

\begin{proof} By Corollary \ref{a1},  All $\tau$-tilting $B$-modules are exactly those forms $P_a\oplus M_1$ and $P_a\oplus S_a\oplus M_2$ where $M_1$ and $M_2$ are $\tau$-tilting modules over $A$  and  $A/\langle e_i\rangle$ respectively.
Note that the projective dimension of $M_1$ as $A$-module is equal to the projective dimension of $M_1$ as $B$-module since $a$ is a source of $B$. Hence $P_a\oplus M_1$ is a tilting  $B$-module if and only if $M_1$ is a tilting $A$-module.

Since there is an exact sequence   $0\to S_i\to P_a\to S_a\to 0$ in $\mod B$, we have the projective dimension of $S_a$ is at least two since $S_i$ is not projective when  $i$ is not a sink. Hence  $P_a\oplus S_a\oplus M_2$ is not tilting.
Thus, $|\t B|=|\t A|$.
\end{proof}

\begin{example}\label{}{\rm

Let $B$ be a algebra given by the quiver $$\xymatrix@R=0.00001PT@C=10PT{&&3\\
1\ar[r]&2\ar[ru]\ar[rd]&\\
&&4}$$
with $\rad^2=0$.
Assume that $A$ is the path algebra given by the quiver $\xymatrix@C=10PT{3&2\ar[l]\ar[r]&4}$, we have $B=A[2]$.
There are  five $\tau$-tilting $A$-modules  as follows (there are exactly all tilting-$A$-modules since $A$ is hereditary)
$${\smallmatrix 3
\endsmallmatrix}{\smallmatrix 2\\3~4
\endsmallmatrix}{\smallmatrix 4
\endsmallmatrix},~\quad{\smallmatrix 2\\4
\endsmallmatrix}{\smallmatrix 2\\3~4
\endsmallmatrix}{\smallmatrix 4
\endsmallmatrix},~\quad {\smallmatrix 3
\endsmallmatrix}{\smallmatrix 2\\3~4
\endsmallmatrix}{\smallmatrix 2\\3
\endsmallmatrix},~\quad {\smallmatrix 2\\4
\endsmallmatrix}{\smallmatrix 2\\3~4
\endsmallmatrix}{\smallmatrix 2
\endsmallmatrix},~\quad {\smallmatrix 2
\endsmallmatrix}{\smallmatrix 2\\3~4
\endsmallmatrix}{\smallmatrix 2\\3
\endsmallmatrix}.~$$
We only have one $\tau$-tilting $A/\langle e_2\rangle$-module ${\smallmatrix 3
\endsmallmatrix}{\smallmatrix 4
\endsmallmatrix}$. Hence, we get all $\tau$-tilting $B$-modules by Corollary \ref{a1}.
$${\smallmatrix 1\\2
\endsmallmatrix}{\smallmatrix 3
\endsmallmatrix}{\smallmatrix 2\\3~4
\endsmallmatrix}{\smallmatrix 4
\endsmallmatrix},~\quad{\smallmatrix 1\\2
\endsmallmatrix}{\smallmatrix 2\\4
\endsmallmatrix}{\smallmatrix 2\\3~4
\endsmallmatrix}{\smallmatrix 4
\endsmallmatrix},~\quad {\smallmatrix 1\\2
\endsmallmatrix}{\smallmatrix 3
\endsmallmatrix}{\smallmatrix 2\\3~4
\endsmallmatrix}{\smallmatrix 2\\3
\endsmallmatrix},~\quad {\smallmatrix 1\\2
\endsmallmatrix}{\smallmatrix 2\\4
\endsmallmatrix}{\smallmatrix 2\\3~4
\endsmallmatrix}{\smallmatrix 2
\endsmallmatrix},~\quad {\smallmatrix 1\\2
\endsmallmatrix}{\smallmatrix 2
\endsmallmatrix}{\smallmatrix 2\\3~4
\endsmallmatrix}{\smallmatrix 2\\3
\endsmallmatrix},~\quad {\smallmatrix 1\\2
\endsmallmatrix}{\smallmatrix 1
\endsmallmatrix}{\smallmatrix 3
\endsmallmatrix}{\smallmatrix 4
\endsmallmatrix}.$$
By Corollary \ref{a2}, $${\smallmatrix 1\\2
\endsmallmatrix}{\smallmatrix 3
\endsmallmatrix}{\smallmatrix 2\\3~4
\endsmallmatrix}{\smallmatrix 4
\endsmallmatrix},~\quad{\smallmatrix 1\\2
\endsmallmatrix}{\smallmatrix 2\\4
\endsmallmatrix}{\smallmatrix 2\\3~4
\endsmallmatrix}{\smallmatrix 4
\endsmallmatrix},~\quad {\smallmatrix 1\\2
\endsmallmatrix}{\smallmatrix 3
\endsmallmatrix}{\smallmatrix 2\\3~4
\endsmallmatrix}{\smallmatrix 2\\3
\endsmallmatrix},~\quad {\smallmatrix 1\\2
\endsmallmatrix}{\smallmatrix 2\\4
\endsmallmatrix}{\smallmatrix 2\\3~4
\endsmallmatrix}{\smallmatrix 2
\endsmallmatrix},~\quad {\smallmatrix 1\\2
\endsmallmatrix}{\smallmatrix 2
\endsmallmatrix}{\smallmatrix 2\\3~4
\endsmallmatrix}{\smallmatrix 2\\3
\endsmallmatrix}$$ are all tilting $B$-modules
}\end{example}

Next, we will consider the relationship between $\stilt B$ and $\stilt A$. We need the following notions.

 Let $A$ be an algebra.
The {\it support $\tau$-tilting quiver}(or Hasse quiver) $\H(A)$ of $A$ is defined as follows (more  detail can be found  \cite[Definition 2.29]{AIR})

$\bullet$ vertices :  the isomorphisms classes of basic support $\tau$-tilting $A$-modules.

$\bullet$ arrows:  from a module to its left mutation.

It is well known that $\H(A)$ is a poset. Let $\mathcal{N}$ be a subposet of $\H(A)$ and $\mathcal{N}':=\H(A)\setminus \mathcal{N}$.  We define a new quiver $\H(A)^{\mathcal{N}}$  from $\H(A)$ as follows.

$\bullet$ vertices :   vertices in $\H(A)$ and $\mathcal{N}^+$ where $\mathcal{N}^+$ is a copy of $\mathcal{N}$.

$\bullet$ arrows:  $\{a_1\to a_2\mid a_1\to a_2 \in \mathcal{N}'\} \coprod \{n_2\to a_2\mid n_2\to a_2,  n_2\in \mathcal{N}, a_2\in \mathcal{N}'\}$

~~~~~~~~\qquad$\coprod \{n_1\to n_2, n^+_1\to n^+_2\mid n_1\to n_2\in \mathcal{N}\}$$\coprod \{a_1\to n^+_1\mid a_1\to n_1, n_1\in \mathcal{N}, a_1\in \mathcal{N}'\}$

~~~~~~~~\qquad$\coprod \{n^+_1\to n_1\mid n_1\in \mathcal{N}\}.$

$$\xymatrix@R=2PT@C=5PT{a_1\ar[dddd]\ar[rrd]&&&&&&&&a_1\ar[dddd]\ar[rrd]&&&\\
&&n_1\ar[dd]&&&&&&&&n_1^+\ar[rd]\ar[ld]&\\
&&&&&&&&&n_1\ar[rd]&&n_2^+\ar[ld]\\
&&n_2\ar[lld]&&&&&&&&n_2\ar[lld]&\\
a_2&&&&&&&&a_2&&&\\
&\H(A)&&&&&&&&\H(A)^{\mathcal{N}}\\}.$$

Suppose that $A$ is  an algebra with an indecomposable projective-injective module $Q$.  Let $\overline{A}:=A/\Soc(Q)$ and
$$\mathcal{N}:=\{N\in \stilt \overline{A}\mid Q/\Soc(Q)\in \add N~\text{and} ~\Hom_A(N,Q)=0 \}.$$

The following Lemma can be found in \cite[Theorem 3.3]{Adachi2016}.

\begin{lemma}\label{2.5} Let $A$ be an algebra  with an indecomposable projective-injective module $Q$. Then there is an isomorphism of posets
$$\H(A)\longleftrightarrow\H{(\overline{A})}^{\mathcal{N}}.$$
\end{lemma}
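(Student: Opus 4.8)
The plan is to prove the poset isomorphism in two stages: first a bijection on vertices (the support $\tau$-tilting modules), then compatibility with the Hasse arrows (left mutations). Throughout, the governing mechanism is that $Q$ behaves over $A$ exactly as $P_a$ did in Lemma \ref{2.2}. Since $Q$ is injective, every map $N\to Q$ factors through an injective, so the Auslander--Reiten formula $\Hom_A(Q,\tau_A N)\cong D\,\overline{\Hom}_A(N,Q)$ gives $\Hom_A(Q,\tau_A N)=0$ for every $N\in\mod A$; and since $Q$ is projective, $\tau_A Q=0$. Hence $Q$ may always be adjoined to a $\tau$-rigid module, and $\tau_A(Q\oplus N)=\tau_A N$.

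\textbf{Vertex bijection.} I would regard $\mod\overline A$ as the full subcategory of $\mod A$ annihilated by the ideal $\Soc Q$, and define $F\colon \stilt\overline A\coprod\mathcal N^+\to\stilt A$ by $F(N)=N$ for $N\in\stilt\overline A$ (viewed in $\mod A$), and $F(N^+)=Q\oplus N'$ for $N=(Q/\Soc Q)\oplus N'\in\mathcal N$. The crux of well-definedness is a comparison of $\tau_A$ with $\tau_{\overline A}$: applying $\tau$ to the sequence $0\to\Soc Q\to Q\to Q/\Soc Q\to 0$ identifies $\tau_A(Q/\Soc Q)$ with $\Soc Q$, so for $N\in\mathcal N$ the hypothesis $\Hom_A(N,Q)=0$ yields $\Hom_A(N,\tau_A(Q/\Soc Q))=\Hom_A(N,\Soc Q)=0$, which is precisely what makes the ``bottom'' module $N$ itself $\tau_A$-rigid; together with the first paragraph, the ``top'' module $Q\oplus N'$ is $\tau_A$-rigid as well, and one checks both have the right number of indecomposable summands to be support $\tau$-tilting over $A$. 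The same comparison shows that $N\in\stilt\overline A\setminus\mathcal N$ remains support $\tau$-tilting over $A$. Surjectivity follows from the dichotomy that any $M\in\stilt A$ either has $Q\in\add M$, in which case $M=Q\oplus N'$ with $(Q/\Soc Q)\oplus N'\in\mathcal N$, or satisfies $\Soc Q\cdot M=0$, i.e. $M\in\mod\overline A$; injectivity is immediate since the two families are distinguished by whether $Q$ is a summand.

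\textbf{Arrow compatibility.} Here I would use the description of arrows in $\H(A)$ as left mutations at indecomposable summands (\cite[\S2]{AIR}) and match them against the five families of arrows defining $\H(\overline A)^{\mathcal N}$. The connecting arrows $n^+\to n$ are the key new feature: mutating $F(n^+)=Q\oplus N'$ at the summand $Q$ replaces $Q$ by its maximal quotient $Q/\Soc Q$, producing $F(n)=N$, so the covering relation $F(n^+)>F(n)$ in $\H(A)$ is exactly $n^+\to n$. The redirection $\{a_1\to n_1^+\}$ reflects that a left mutation which over $\overline A$ passes from $\mathcal N'$ into $\mathcal N$ must, over $A$, first reach the $Q$-version $F(n_1^+)=Q\oplus N_1'$ rather than $F(n_1)$; dually $\{n_2\to a_2\}$ and $\{a_1\to a_2\}$ are unaffected, since they do not involve creating the summand $Q$, and the internal arrows of $\mathcal N$ duplicate because each such mutation can be performed with or without $Q$ present, giving both $n_1\to n_2$ and $n_1^+\to n_2^+$.

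\textbf{Main obstacle.} The genuinely delicate step is the arrow analysis around the doubled region $\mathcal N$: one must show that left mutation over $A$ corresponds exactly to left mutation over $\overline A$ everywhere except at the summand $Q$, and in particular that no spurious arrows appear and none are lost when passing between $F(n)$ and $F(n^+)$. This rests entirely on the precise comparison of $\tau_A$ with $\tau_{\overline A}$ on $\mod\overline A$ and on controlling the $\Hom$-spaces into $Q$ and $\Soc Q$; arranging these vanishing and non-vanishing statements so that the defining conditions $Q/\Soc Q\in\add N$ and $\Hom_A(N,Q)=0$ of $\mathcal N$ cut out exactly the vertices that get doubled is where the real work lies, and is what \cite[Theorem 3.3]{Adachi2016} carries out in detail.
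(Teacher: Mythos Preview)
The paper does not prove this lemma at all: it is quoted verbatim from \cite[Theorem~3.3]{Adachi2016} and then immediately applied to the algebra $B$ in Proposition~\ref{2.6}. There is therefore no ``paper's own proof'' to compare your proposal against; your sketch is a reasonable outline of what Adachi's argument has to accomplish, and you rightly defer the detailed verification to that reference at the end.

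One genuine slip in your sketch, however: you assert that $\tau_A(Q/\Soc Q)=\Soc Q$, but for an indecomposable projective--injective non-simple $Q$ the relevant almost split sequence is
\[
0\longrightarrow \rad Q\longrightarrow Q\oplus(\rad Q/\Soc Q)\longrightarrow Q/\Soc Q\longrightarrow 0,
\]
so in fact $\tau_A(Q/\Soc Q)=\rad Q$. Your vertex-bijection step survives this correction, since $\Hom_A(N,Q)=0$ still forces $\Hom_A(N,\rad Q)=0$ via the inclusion $\rad Q\hookrightarrow Q$; but the identification with $\Soc Q$ is only valid in the length-two situation $Q=P_a$ treated later in the paper (where $\rad P_a=\Soc P_a=S_i$), not in the generality of the present lemma.
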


Applying this result to the algebra $B$, we have the following

\begin{proposition}\label{2.6} Let $\mathcal{N}:=\{S_a\oplus L\mid L\in \stilt A/\langle e_i\rangle\}.$ Then there is an isomorphism of posets
$$\H(B)\longleftrightarrow \H{(A\times k)}^{\mathcal{N}}.$$

\end{proposition}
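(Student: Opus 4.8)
The plan is to apply Lemma~\ref{2.5} to the algebra $B$, so the whole task reduces to identifying the three pieces of data that feed into that lemma: the indecomposable projective-injective module $Q$, the quotient $\overline{B}$, and the subposet $\mathcal{N}$. First I would take $Q:=P_a$, which is indeed indecomposable projective-injective in $\mod B$ by the remark after the definition of $B:=A[S_i]$ (this uses \cite[Proposition 2.5(c)]{ARS} together with the fact that $i$ is a source). Next I would identify $\overline{B}=B/\Soc(P_a)$. Since there is an almost split sequence $0\to S_i\to P_a\to S_a\to 0$, the socle of $P_a$ is $S_i$ (equivalently $P_a/\Soc(P_a)\cong S_a$), and killing $S_i$ inside $B$ disconnects the extension vertex $a$: the arrow $a\to i$ is no longer present in the quotient, so $\overline{B}\cong A\times k$, where the factor $k$ is the simple algebra supported at $a$. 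This is the identification $\overline{B}=A\times k$ used in the statement.

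The remaining point is to compute the subposet $\mathcal{N}$ explicitly and show it equals $\{S_a\oplus L\mid L\in\stilt A/\langle e_i\rangle\}$. By definition,
$$\mathcal{N}=\{N\in\stilt(A\times k)\mid P_a/\Soc(P_a)\in\add N \text{ and } \Hom_B(N,P_a)=0\}.$$
Since $P_a/\Soc(P_a)\cong S_a$, which is exactly the indecomposable module for the $k$-factor, membership forces $N=S_a\oplus L$ with $L\in\stilt A$ (support $\tau$-tilting modules over a product are products of support $\tau$-tilting modules over the factors, and including the full $k$-factor $S_a$ as a summand is the condition $S_a\in\add N$). Then I must translate the condition $\Hom_B(S_a\oplus L,P_a)=0$. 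We have $\Hom_B(S_a,P_a)=0$ automatically because $\Soc(P_a)=S_i\neq S_a$ and $S_a$ is simple; so the condition is $\Hom_B(L,P_a)=0$. Since $\Soc(P_a)=S_i$, a nonzero map $L\to P_a$ exists precisely when $S_i$ appears in the top-generated part reaching the socle, i.e.\ precisely when $S_i$ is a composition factor of $L$ (here I would use that $i$ is a source, so the only way into $S_i$ is via a summand of $L$ with $S_i$ in its top, exactly as in the proof of Theorem~\ref{2.3}). Hence $\Hom_B(L,P_a)=0$ iff $L$ has no $S_i$ as a composition factor, which is exactly the condition that $L$ be a module over $A/\langle e_i\rangle$; and a support $\tau$-tilting $A$-module with no $S_i$ composition factor is the same as a support $\tau$-tilting $A/\langle e_i\rangle$-module. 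This gives $\mathcal{N}=\{S_a\oplus L\mid L\in\stilt A/\langle e_i\rangle\}$ as required.

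With $Q=P_a$, $\overline{B}=A\times k$, and $\mathcal{N}$ identified, Lemma~\ref{2.5} gives the poset isomorphism $\H(B)\cong\H(A\times k)^{\mathcal{N}}$ directly. The main obstacle I anticipate is the careful verification of the two containments in the description of $\mathcal{N}$ — in particular showing that $\Hom_B(L,P_a)=0$ is equivalent to $L$ having no composition factor $S_i$, which is where the source hypothesis on $i$ is essential and where one reuses the top-of-summand argument from the proof of Theorem~\ref{2.3}. The identification $\overline{B}\cong A\times k$ is routine once one notes $\Soc(P_a)=S_i$ from the almost split sequence, and the product decomposition of support $\tau$-tilting modules is standard, so those steps should be short.
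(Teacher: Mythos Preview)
Your proposal is correct and follows essentially the same route as the paper: apply Lemma~\ref{2.5} with $Q=P_a$, identify $\overline{B}=B/\Soc(P_a)\cong A\times k$ via $\Soc(P_a)=S_i$, and then unwind the definition of $\mathcal{N}$ to reach $\{S_a\oplus L\mid L\in\stilt A/\langle e_i\rangle\}$. The only cosmetic difference is that the paper rewrites $\Hom_B(L,P_a)=0$ as $\Hom_A(L,S_i)=0$ before invoking the source hypothesis, whereas you argue the composition-factor criterion for $\Hom_B(L,P_a)$ directly; both are the same observation.
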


\begin{proof} Take  $Q=P_a$ which is an indecomposable projective-injective $B$-module. Since $\Soc P_a\cong S_i$, we have $\overline{B}=B/S_i\cong A\times k$ and $P_a/S_i\cong S_a$. We only need to show $\mathcal{N}=\{S_a\oplus L\mid L\in \stilt A/\langle e_i\rangle\}$  in Lemma \ref{2.5}. Note that

\begin{equation*}
\begin{split}
\mathcal{N}&=\{N\in \stilt \overline{B}\mid Q/\Soc(Q)\in \add N~\text{and} ~\Hom_B(N,Q)=0 \}\\
&=\{N\in \stilt (A\times k)\mid S_a\in \add N~\text{and} ~\Hom_B(N,P_a)=0 \}\\
&=\{S_a\oplus L\mid L\in \silt A~\text{and} ~\Hom_B(L,P_a)=0 \}\\
&=\{S_a\oplus L\mid L\in \silt A~\text{and} ~\Hom_A(L,S_i)=0 \}.
\end{split}
\end{equation*}
Since $i$ is a source point of $A$,  this implies $L$ has no $S_i$  as a composition factor and hence it  is exactly a support $\tau$-tilting $A/\langle e_i\rangle$-module. Thus,
$\mathcal{N}=\{S_a\oplus L\mid L\in \stilt A/\langle e_i\rangle\}.$

\end{proof}
\begin{corollary}\label{2.7} We have
$$ |\stilt B|=2|\stilt A|+|\stilt A/\langle e_i\rangle|.$$
\end{corollary}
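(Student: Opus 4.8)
The plan is to extract the cardinality identity directly from the poset isomorphism in Proposition \ref{2.6}, so the only real work is a bookkeeping count of the vertices of $\H(B)^{\mathcal{N}}$ (equivalently $\H(A\times k)^{\mathcal{N}}$) in terms of those of $\H(A)$ and $\H(A/\langle e_i\rangle)$. First I would recall that $|\stilt B|$ equals the number of vertices of $\H(B)$, and by Proposition \ref{2.6} this is the number of vertices of $\H(A\times k)^{\mathcal{N}}$. By the construction of the quiver $\H(C)^{\mathcal{N}}$ recalled just before Lemma \ref{2.5}, its vertex set consists of all vertices of $\H(C)$ together with one extra copy $\mathcal{N}^+$ of the subposet $\mathcal{N}$; hence
\[
|\stilt B| = |\stilt (A\times k)| + |\mathcal{N}|.
\]

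Next I would evaluate each of the two terms. For the first, since $k$ is the one-dimensional semisimple algebra, every support $\tau$-tilting $(A\times k)$-module is a product of a support $\tau$-tilting $A$-module with a support $\tau$-tilting $k$-module, and $\stilt k$ has exactly two elements (namely $k$ and $0$); therefore $|\stilt(A\times k)| = 2|\stilt A|$. (One can also cite that $\H(C\times k)\cong \H(C)\times \H(k)$ and $\H(k)$ is the two-element chain.) For the second term, Proposition \ref{2.6} already identifies $\mathcal{N}$ with $\{\,S_a\oplus L \mid L\in\stilt A/\langle e_i\rangle\,\}$, and the assignment $S_a\oplus L\mapsto L$ is a bijection of this set onto $\stilt A/\langle e_i\rangle$, so $|\mathcal{N}| = |\stilt A/\langle e_i\rangle|$. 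Substituting both into the displayed equation gives
\[
|\stilt B| = 2|\stilt A| + |\stilt A/\langle e_i\rangle|,
\]
as claimed.

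The only point demanding any care — and hence the main obstacle, though a mild one — is making sure the vertex count of $\H(C)^{\mathcal{N}}$ is read off correctly from its definition: the construction adds exactly $|\mathcal{N}|$ new vertices (the copy $\mathcal{N}^+$) and deletes none, so no double-counting or omission occurs. Everything else (the factorization of support $\tau$-tilting modules over $A\times k$, and the trivial bijection stripping off the $S_a$ summand) is routine once Proposition \ref{2.6} is in hand. I would therefore keep the proof to these three short steps and not belabor the verifications.
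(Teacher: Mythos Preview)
Your proof is correct and follows exactly the same route as the paper's: use Proposition \ref{2.6} to identify $|\stilt B|$ with the number of vertices of $\H(A\times k)^{\mathcal{N}}$, then split this as $|\H(A\times k)|+|\mathcal{N}|=2|\stilt A|+|\stilt A/\langle e_i\rangle|$. The paper compresses the whole argument into a single displayed chain of equalities, but the ingredients (the vertex count of the doubled quiver, the factor of $2$ from $A\times k$, and the bijection $\mathcal{N}\leftrightarrow\stilt A/\langle e_i\rangle$) are identical to yours.
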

\begin{proof}
According to the definition of  $\H{(A\times k)}^{\mathcal{N}}$, we have
$$|\H{(A\times k)}^{\mathcal{N}}|= |\H{(A\times k)}|+|{\mathcal{N}}|=2|\H(A)|+|{\mathcal{N}}|=2|\stilt A|+|\stilt A/\langle e_i\rangle|.$$ Therefore, $|\stilt B|=|\H(B)|=2|\stilt A|+|\stilt A/\langle e_i\rangle|$ by Proposition \ref{2.6}.
\end{proof}

Now, it is easy to draw the quiver of $\H(B)$ from the quiver of $\H(A)$ as follows.
$$\H(A)\to\H(A\times k)\to \H{(A\times k)}^{\mathcal{N}}\cong \H(B).$$
\begin{example} {\rm
Let $A$ be a finite dimensional $k$-algebra given by the quiver
$$2 {\longrightarrow} 1.$$ Considering the one-point extension of $A$ by the simple module corresponding to the point $2$,
the algebra $B=A[2]$ is given by the quiver
$$3 \stackrel{\alpha}{\longrightarrow} 2 \stackrel{\beta}{\longrightarrow} 1$$ with the relation  $\alpha\beta=0$.
We can get  the Hasse quiver $\H(B)$ of $B$ as follows where  $\mathcal{N}$ is remarked by red and
$\mathcal{N}^+$ by blue.
$$\xymatrix@R=8PT@C=8PT{&{\smallmatrix 2\\1
\endsmallmatrix}{\smallmatrix 1
\endsmallmatrix}\ar[dd]\ar[ld]&&&&&{\smallmatrix 2\\1
\endsmallmatrix}{\smallmatrix 1
\endsmallmatrix}\ar[dd]\ar[ld]&&{\smallmatrix 3
\endsmallmatrix}{\smallmatrix 2\\1
\endsmallmatrix}{\smallmatrix 1
\endsmallmatrix}\ar[dd]\ar[ld]\ar[ll]&&&&&{\smallmatrix 2\\1
\endsmallmatrix}{\smallmatrix 1
\endsmallmatrix}\ar[dd]\ar[ld]&&{\smallmatrix 3\\2
\endsmallmatrix}{\smallmatrix 2\\1
\endsmallmatrix}{\smallmatrix 1
\endsmallmatrix}\ar[ddr]\ar[ld]\ar[ll]&\\
{\smallmatrix 2\\1
\endsmallmatrix}{\smallmatrix 2
\endsmallmatrix}\ar[dd]&&&&&{\smallmatrix 2\\1
\endsmallmatrix}{\smallmatrix 2
\endsmallmatrix}\ar[dd]&&{\smallmatrix 3
\endsmallmatrix}{\smallmatrix 2\\1
\endsmallmatrix}{\smallmatrix 2
\endsmallmatrix}\ar[dd]\ar[ll]&&&&&{\smallmatrix 2\\1
\endsmallmatrix}{\smallmatrix 2
\endsmallmatrix}\ar[dd]&&{\smallmatrix 3\\2
\endsmallmatrix}{\smallmatrix 2\\1
\endsmallmatrix}{\smallmatrix 2
\endsmallmatrix}\ar[dd]\ar[ll]&&\\
&{\smallmatrix 1
\endsmallmatrix}\ar[dd]&&\ar@{}[r]&&&{\smallmatrix 1
\endsmallmatrix}\ar[dd]&&{\smallmatrix \color{red}{3}
\endsmallmatrix}{\smallmatrix \color{red}{1}
\endsmallmatrix}\ar[dd]\ar[ll]&&&&&{\smallmatrix 1
\endsmallmatrix}\ar[dd]&&{\smallmatrix \color{red}{3}
\endsmallmatrix}{\smallmatrix \color{red}{1}
\endsmallmatrix}\ar[dd]\ar[ll]&{\smallmatrix \color{blue}{3}
\endsmallmatrix}{\smallmatrix \color{blue}{1}
\endsmallmatrix}{\smallmatrix \color{blue}{3}\\\color{blue}{2}
\endsmallmatrix}\ar[dd]\ar[l]\\
{\smallmatrix 2
\endsmallmatrix}\ar[dr]&&&&&{\smallmatrix 2
\endsmallmatrix}\ar[dr]&&{\smallmatrix 3
\endsmallmatrix}{\smallmatrix 2
\endsmallmatrix}\ar[dr]\ar[ll]&&&&&{\smallmatrix 2
\endsmallmatrix}\ar[dr]&&{\smallmatrix 3\\2
\endsmallmatrix}{\smallmatrix 2
\endsmallmatrix}\ar[drr]\ar[ll]&&\\
&{\smallmatrix0
\endsmallmatrix}&&&&&{\smallmatrix0
\endsmallmatrix}&&{\smallmatrix \color{red}{3}
\endsmallmatrix}\ar[ll]&&&&&{\smallmatrix0
\endsmallmatrix}&&{\smallmatrix \color{red}{3}
\endsmallmatrix}\ar[ll]&{\smallmatrix \color{blue}{3}
\endsmallmatrix}{\smallmatrix \color{blue}{3}\\  \color{blue}{2}
\endsmallmatrix}\ar[l]\\
&\H(A)&&\ar[rr]&&&&\H(A\times k)&&&\ar[rr]&&&&\H(B)&&
}$$}
\end{example}

The linearly Dynkin type algebras be the following quivers.
 $$\xymatrix@!@R=5pt@C=5pt{
A_n:&n\ar[r]&n-1\ar[r]&\cdots\ar[r]&2\ar[r]&1}$$

$$\xymatrix@R=5PT@C=15PT{&&&&1\\
D_n:~~~n\ar[r]&n-1\ar[r]&\cdots\ar[r]&3\ar[ru]\ar[rd]&\\
&&&&2}$$

Take $A_n^2:=kA_n/\rad^2$  and $D_n^2:=kD_n/\rad^2$.  Applying our results, we can give   recurrence relations about the numbers of $\tau$-tilting modules and support $\tau$-tilting modules over $A_n^2$ and $D_n^2$.

\begin{theorem} Let $\Lambda^2_n$  be an algebra ($A_n^2$ or $D_n^2$). Then we have
\begin{enumerate}
\item[(1)]  $|\tilt \Lambda^2_n|=|\tilt \Lambda^2_{n-1}|+|\tilt \Lambda^2_{n-2}|.$
\item[(2)] $|\stilt \Lambda^2_n|=2|\stilt \Lambda^2_{n-1}|+|\stilt \Lambda^2_{n-2}|.$
\end{enumerate}
\end{theorem}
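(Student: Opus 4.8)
The plan is to realise each $\Lambda^2_n$ as a one-point extension of $\Lambda^2_{n-1}$ of exactly the type studied in Section \ref{sect 2}, and then to read the two recurrences off Corollaries \ref{2.4} and \ref{2.7}. First I would fix the source vertex: in $A^2_n$ the vertex $n$ is a source, and in $D^2_n$ (for $n\geq 4$) the vertex $n$ is again the unique source; in $\Lambda^2_{n-1}$ the corresponding source vertex is $j:=n-1$. The key structural claim is
$$\Lambda^2_n\cong \Lambda^2_{n-1}[S_j].$$
Indeed, deleting the source vertex $n$ together with its unique outgoing arrow $n\to n-1$ from the quiver (with relations) of $\Lambda^2_n$ reproduces precisely the quiver with relations of $\Lambda^2_{n-1}$; equivalently, $\Lambda^2_n$ is obtained from $\Lambda^2_{n-1}$ by adjoining a new source vertex $a=n$ together with a single arrow $a\to j$, which is exactly the one-point extension $\Lambda^2_{n-1}[X]$ with $X$ the $\Lambda^2_{n-1}$-module admitting a single arrow out of $a$, i.e. $X=S_j$. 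The one point left to check is that the extension algebra really has vanishing square radical: the only new indecomposable projective is $P_a$, and $\rad P_a\cong S_j$ is simple, hence $\rad^2 P_a=0$; all other projectives are unchanged, so $\rad^2(\Lambda^2_{n-1}[S_j])=0$ and its quiver is that of $\Lambda_n$. Thus $\Lambda^2_{n-1}[S_j]\cong\Lambda^2_n$.

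Next I would identify the quotient $\Lambda^2_{n-1}/\langle e_j\rangle$ appearing in Corollaries \ref{2.4} and \ref{2.7}. Deleting the source vertex $j=n-1$ from the quiver (with induced relations) of $\Lambda^2_{n-1}$ yields exactly the quiver with relations of $\Lambda^2_{n-2}$, so
$$\Lambda^2_{n-1}/\langle e_j\rangle\cong \Lambda^2_{n-2}.$$
For the previous two steps to make sense in the $D$-case near the bottom of the recursion, I would record explicitly the standard conventions $D_3=A_3$ (oriented with the branch vertex as a source) and $D_2=A_1\times A_1$; with these, both identifications and hence the recurrences remain valid down to the stated range of $n$, and the very small cases can be verified directly.

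With these two isomorphisms in hand, the conclusion is immediate. Applying Corollary \ref{2.4} with $A=\Lambda^2_{n-1}$, $B=\Lambda^2_n$ and source $i=j$ gives
$$|\tilt \Lambda^2_n|=|\tilt \Lambda^2_{n-1}|+|\tilt(\Lambda^2_{n-1}/\langle e_j\rangle)|=|\tilt \Lambda^2_{n-1}|+|\tilt \Lambda^2_{n-2}|,$$
which is (1), and applying Corollary \ref{2.7} in the same way gives
$$|\stilt \Lambda^2_n|=2|\stilt \Lambda^2_{n-1}|+|\stilt(\Lambda^2_{n-1}/\langle e_j\rangle)|=2|\stilt \Lambda^2_{n-1}|+|\stilt \Lambda^2_{n-2}|,$$
which is (2).

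I do not expect any deep obstacle here; the content is essentially organisational, reducing everything to the two corollaries. The only points that require genuine care are the bookkeeping for the $D$-type quivers at the bottom of the recursion (pinning down what $D^2_3$ and $D^2_2$ should mean and checking the base cases by hand) and the verification that the one-point extension reproduces the radical-square-zero relation rather than a longer path relation — which is precisely the observation that $\rad P_a\cong S_j$ is simple.
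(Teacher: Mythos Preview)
Your proposal is correct and follows exactly the paper's approach: realise $\Lambda^2_n$ as the one-point extension $\Lambda^2_{n-1}[S_{n-1}]$, observe that $\Lambda^2_{n-1}/\langle e_{n-1}\rangle\cong \Lambda^2_{n-2}$, and apply Corollaries~\ref{2.4} and~\ref{2.7}. You simply supply more justification than the paper does (the $\rad^2=0$ check and the small $D$-type conventions), but the argument is the same.
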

\begin{proof}
Since $\Lambda^2_n$ is the one-point extension of $\Lambda^2_{n-1}$ by simple module $S_{n-1}$ and  $\Lambda^2_{n-1}/\langle e_{n-1}\rangle\cong \Lambda^2_{n-2}$. Now, the result follows from Corollary \ref{2.4} and Corollary \ref{2.7}.
\end{proof}

\begin{corollary}~
\begin{enumerate}
\item[(1)]  $|\t A^2_n|=2~~(n\geqslant 2)$.
\item[(2)]  $|\tilt A^2_n|=\frac{(1+\sqrt{5})^{n+1}-(1-\sqrt{5})^{n+1}}{\sqrt{5}\cdot 2^{n+1}}.$
\item[(3)] $|\stilt A^2_n|=\frac{(1+\sqrt{2})^n-(1-\sqrt{2})^n}{2\sqrt{2}}.$
\item[(4)] $|\t D^2_n|=5.$
\item[(5)]  $|\tilt D^2_n|=\frac{(2\sqrt{5}-1)(1+\sqrt{5})^{n-1}+(2\sqrt{5}+1)(1-\sqrt{5})^{n-1}}{\sqrt{5}\cdot 2^{n-1}}.$
\item[(6)] $|\stilt D^2_n|=\frac{(3\sqrt{2}-1)(1+\sqrt{2})^{n-1}+(3\sqrt{2}+1)(1-\sqrt{2})^{n-1}}{\sqrt{2}}.$
\end{enumerate}
\end{corollary}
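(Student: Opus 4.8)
The plan is to deduce all six items from the two recurrences established in the preceding theorem, together with Corollary~\ref{a2}; once those are available the corollary is a matter of recording correct initial data and solving constant‑coefficient linear recurrences, so the write‑up is essentially bookkeeping.

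First I would handle (2), (3), (5), (6). By the preceding theorem, $|\tilt A^2_n|$ and $|\tilt D^2_n|$ satisfy the Fibonacci‑type recurrence $a_n=a_{n-1}+a_{n-2}$, while $|\stilt A^2_n|$ and $|\stilt D^2_n|$ satisfy the Pell‑type recurrence $a_n=2a_{n-1}+a_{n-2}$. The associated characteristic polynomials are $x^2-x-1$ and $x^2-2x-1$, with roots $\tfrac{1\pm\sqrt5}{2}$ and $1\pm\sqrt2$; hence each of the four sequences has the closed form $c_1r_1^n+c_2r_2^n$, determined by any two consecutive values. I would compute those two values by hand from the small algebra at the foot of each tower: $A^2_1=k$ and $A^2_2=kA_2$ for the $A$‑series, and $D^2_2\cong k\times k$ and $D^2_3\cong kD_3$ (the path algebra of a Dynkin quiver of type $A_3$) for the $D$‑series. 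Because these are hereditary algebras of Dynkin type, their $\tau$‑rigid modules have projective dimension at most one, so their $\tau$‑tilting modules coincide with their classical tilting modules, and their numbers of ($\tau$‑)tilting and of support $\tau$‑tilting modules are the standard Catalan‑type counts (which can also simply be listed). Substituting a consecutive pair of these values into $c_1r_1^n+c_2r_2^n$, solving the resulting $2\times2$ linear system, and simplifying using $r_1r_2=-1$ then yields the four displayed formulas.

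Next, (1) and (4) come from Corollary~\ref{a2}. Since $A^2_n=A^2_{n-1}[S_{n-1}]$ and the vertex $n-1$ is the unique source of $A^2_{n-1}$ and, for $n\geq3$, is not a sink (there is an arrow $n-1\to n-2$), Corollary~\ref{a2} gives $|\t A^2_n|=|\t A^2_{n-1}|$; telescoping down to $|\t A^2_2|=|\t kA_2|=2$ proves (1). In the same way $D^2_n=D^2_{n-1}[S_{n-1}]$ with $n-1$ a source but not a sink of $D^2_{n-1}$ (owing to $n-1\to n-2$, or to $3\to1$ and $3\to2$ when $n-1=3$), so $|\t D^2_n|=|\t D^2_{n-1}|$, and this telescopes to $|\t D^2_3|=|\t kD_3|=5$, the number of tilting modules over a hereditary algebra of type $A_3$; this proves (4).

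The reasoning is essentially formal, and the only place where care is required — what I would regard as the hard part — is the initial data: one must identify exactly which small algebra sits at the bottom of each recurrence, compute its numbers of ($\tau$‑)tilting and support $\tau$‑tilting modules without error (direct enumeration suffices, or one invokes the known Catalan‑type formulas for hereditary Dynkin algebras), and check at each step of the tilting induction that the source vertex created by the one‑point extension is genuinely not a sink, so that Corollary~\ref{a2} legitimately applies.
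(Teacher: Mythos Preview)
Your approach is correct and is exactly the derivation the paper intends: the paper gives no separate proof for this corollary, treating it as an immediate consequence of the recurrences in the preceding theorem together with Corollary~\ref{a2}, and your write-up simply makes that explicit by solving the linear recurrences from the appropriate initial values and telescoping the tilting counts. The only additional detail you supply beyond the paper is the identification of the base cases $D^2_3\cong kD_3$ and $D^2_2\cong k\times k$, which is consistent with the paper's convention $\Lambda^2_{n-1}/\langle e_{n-1}\rangle\cong\Lambda^2_{n-2}$.
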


\begin{example} We give some examples of the numbers of $\tau$-tilting modules and support $\tau$-tilting modules over $A_n^2$ and $D_n^2$ in the following tables.
\begin{table}[h]
\begin{center}
\begin{tabular}{l|cccccccccccc}
 \hline
$n$& 1&2&3&4&5&6&7&8&9&10\\
  \hline
$|\tilt A^2_n|$ & 1&2&3&5&8&13&21&34&55&89\\
$|\stilt A^2_n|$ &2&5 & 12 &29&70&169&408&985&2378&5741\\
 \hline \end{tabular}
 \end{center}
\end{table}

\begin{table}[h]
\begin{center}
\begin{tabular}{l|cccccccccccc}
 \hline
$n$& 4&5&6&7&8&9&10\\
  \hline
$|\tilt D^2_n|$ & 6&11&17&28&45&73&118\\
$|\stilt D^2_n|$ &32& 78 & 118 &454&1026&2506&6038\\
 \hline \end{tabular}
 \end{center}
\end{table}

\end{example}

\section*{Acknowledgements}
The author would like to thank Professor Zhaoyong Huang for helpful discussions. This work was partially supported by the National natural Science Foundation of China (No. 11971225).

\end{document}